\newtheorem{theorem}{Theorem}[]
\newtheorem{lemma}[theorem]{Lemma}
\newtheorem{conjecture}[theorem]{Conjecture}
\newcommand{\ma}{\mathcal}
\newcommand{\mr}{\mathscr}
\newcommand{\s}{\subseteq}
\newcommand{\fr}{\frac}
\newcommand{\lc}{\lceil}
\newcommand{\rc}{\rceil}
\newcommand{\e}{{\rm{ex}}}
\begin{document}

\title{Degenerate Tur\'an densities of sparse hypergraphs II:\\ a solution to the Brown-Erd\H{o}s-S\'os problem for every uniformity}

\author{Chong Shangguan\footnote{Research Center for Mathematics and Interdisciplinary Sciences, Shandong University, Qingdao 266237, China. Email: theoreming@163.com}
}

\date{}
\maketitle

\begin{abstract}
\noindent For fixed integers $r\ge 3, e\ge 3$, and $v\ge r+1$, let $f_r(n,v,e)$ denote the maximum number of edges in an $n$-vertex $r$-uniform hypergraph in which the union of arbitrary $e$ distinct edges contains at least $v+1$ vertices. In 1973, Brown, Erd\H{o}s and S\'os proved that $f_r(n,er-(e-1)k,e)=\Theta(n^k)$ and
conjectured that the limit $\lim_{n\rightarrow\infty}\frac{f_3(n,e+2,e)}{n^2}$ always exists for all fixed integers $e\ge 3$. In 2020 Shangguan and Tamo conjectured that the limit $\lim_{n\rightarrow\infty}\frac{f_r(n,er-(e-1)k,e)}{n^k}$ always exists for all fixed integers $r>k\ge 2$ and $e\ge 3$, which contains the BES conjecture as a special case for $r=3, k=2$. Recently, based on a result of Glock, Joos, Kim, K\"uhn, Lichev, and Pikhurko, Delcourt and Postle proved the BES conjecture. Extending their result, we show that the limit $\lim_{n\rightarrow\infty}\frac{f_r(n,er-2(e-1),e)}{n^2}$ always exists, thereby proving the BES conjecture for every uniformity.
\end{abstract}

\section{Introduction}
We will focus on the sparse hypergraph problem introduced by Brown, Erd\H{o}s and S\'os \cite{Brown-Erdos-Sos-1} in 1973. To move forward, we will begin with some notations.

For an integer $r\ge 2$, an $r$-uniform hypergraph $\ma{H}$ (or $r$-graph, for short) on the vertex set $V(\ma{H})$, is a family of $r$-element subsets of $V(\ma{H})$,  called the {\it edges} of $\ma{H}$.
For fixed integers $r\ge 3, e\ge 3$, and $v\ge r+1$, an $r$-graph $\ma{H}$ is called {\it $\mr{G}_r(v,e)$-free} if the union of arbitrary $e$ distinct edges of it contains at least $v+1$ vertices, namely, for arbitrary distinct $A_1,\ldots,A_e\in\ma{H}$ one has $|\cup_{i=1}^e A_i|\ge v+1$. Due to the sparsity of its edges, such $r$-graphs are also termed {\it sparse hypergraphs} \cite{Furedi-Ruszinko-no-grid}. Let $f_r(n,v,e)$ denote the maximum number of edges in an $n$-vertex $\mr{G}_r(v,e)$-free $r$-graph.

In 1973, Brown, Erd\H{o}s and S\'os \cite{Brown-Erdos-Sos-1} initiated the study of the function $f_r(n,v,e)$, which has attracted considerable attention throughout the years.
More concretely, they showed that
\begin{align}\label{eq:BESbound}
   \Omega(n^{\fr{er-v}{e-1}})=f_r(n,v,e)=O(n^{\lc\fr{er-v}{e-1}\rc}).
  \end{align}

\noindent Improvements on lower or upper bounds of \eqref{eq:BESbound} for less general parameters were obtained in a series of works, see, e.g. \cite{Alon-Shapira-sparse,Brown-Erdos-Sos-1,conlon2019new,Erdos-Frankl-Rodl-sparse,Ge-Shangguan-spa-hyp-bou-con,Glock-triple,Rodl,Ruzsa-Szemeredi-63,Sarkozy-Selkow-extension-BES,Sarkozy-Selkow-extension-RS,Shangguan-Tamo-degenerate,Shangguan-Tamo-spa-hyp-coding,Sidorenko-approximate-steiner,Brown-Erdos-Sos-2}.

In this paper we are interested in the special case where $k=\frac{er-v}{e-1}\in\{1,\ldots,r-1\}$ is an integer. For $k=1$ the authors of \cite{Glock64} (see also \cite{erdosr=2k=1} (4) for the case $r=2$) noted that
\begin{align}\label{eq:k=1}
   \lim_{n\rightarrow\infty}\frac{f_r(n,er-(e-1),e)}{n}=\frac{(e-1)n}{(e-1)(r-1)+1}.
\end{align}
According to \eqref{eq:BESbound} it is easy to see that
\begin{align} \label{eq:exp}
    f_r(n,er-(e-1)k,e)=\Theta(n^k),
\end{align}

\noindent and it is natural to ask whether the limit
$$\lim_{n\rightarrow\infty}\frac{f_r(n,er-(e-1)k,e)}{n^k}$$
exists. For $e=2$ the above question is completely resolved by a classical result of R{\"o}dl \cite{Rodl-nibble}. So we will focus on the case $e\ge 3$. For $e\ge 3, r=3, k=2$, Brown, Erd\H{o}s and S\'os \cite{Brown-Erdos-Sos-1} posed the following conjecture:
\begin{conjecture}[see Section 5 in \cite{Brown-Erdos-Sos-1}]\label{con:besconjecture}
The limit
$\lim_{n\rightarrow\infty}\fr{f_3(n,e+2,e)}{n^2}$
exists for every fixed $e\ge 3$.
\end{conjecture}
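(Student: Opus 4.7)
Plan. I adapt the Delcourt--Postle blueprint, whose core tool is the conflict-free matching theorem of Glock, Joos, Kim, K\"uhn, Lichev, and Pikhurko (GJKKLP). Set $\gamma := \limsup_n f_3(n, e+2, e)/n^2$. Since trivially $\liminf \le \limsup$, it suffices to show $\liminf \ge \gamma$: for every $\epsilon > 0$ and all sufficiently large $n$, I will construct a $\mr{G}_3(e+2, e)$-free $3$-graph on $[n]$ with at least $(\gamma - \epsilon)n^2$ edges. This is the ``transference'' direction --- taking a near-extremal example on $[n_0]$ and manufacturing a comparable example on $[n]$ for $n \gg n_0$ --- and it is exactly where the nibble does the heavy lifting.

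Construction. Fix $n_0$ and a near-extremal $3$-graph $\ma{H}_0$ on $[n_0]$ witnessing $f_3(n_0, e+2, e)/n_0^2 \ge \gamma - \epsilon$. Encode each triple on $[n]$ as a hyperedge of an auxiliary $3$-uniform host hypergraph $\ma{A}_n$ on ground set $\bi{[n]}{2}$, where $\{u,v,w\}$ corresponds to the hyperedge $\{\{u,v\},\{u,w\},\{v,w\}\}$; matchings of $\ma{A}_n$ are then precisely partial Steiner triple systems. Let $\ma{C}_n$ denote the conflict hypergraph on vertex set $\bi{[n]}{3}$ whose edges are the $\mr{G}_3(e+2, e)$-configurations, so that conflict-free matchings of $\ma{A}_n$ biject with $\mr{G}_3(e+2, e)$-free $3$-graphs on $[n]$. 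Define a weight $w_T \in [0,1]$ on each triple by averaging over random maps $\phi \colon [n] \to [n_0]$, setting $w_T$ proportional to $\Pr_\phi[\phi(T) \in \ma{H}_0]$ and rescaling so that $\sum_T w_T \ge (\gamma - \epsilon)n^2 - o(n^2)$. Apply the GJKKLP conflict-free nibble to $(\ma{A}_n, \ma{C}_n)$ weighted by $w$; the output matching corresponds to a $\mr{G}_3(e+2, e)$-free $3$-graph of size $(\gamma - \epsilon)n^2 - o(n^2)$, completing the lower bound and thus the existence of the limit.

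Main obstacle. The critical hypotheses of GJKKLP are codegree bounds on the conflict hypergraph $\ma{C}_n$: for each $s \in \{2, \ldots, e\}$, every $s$-subset of triples should lie in at most $O(n^{e+2-2s})$ members of $\ma{C}_n$. Verifying this requires an intricate case analysis on the possible \emph{intersection patterns} of the $s$ prescribed triples --- parameterized by the number of vertices and pairs they share --- with the observation that the tight vertex budget $e+2$ for the full $e$-tuple forces most extensions to reuse existing vertices, delivering precisely the required codegree saving. A second, subtler challenge is ensuring the weighting $w$ is sufficiently regular for the nibble's concentration arguments to apply: each pair $\{u,v\}$ should receive roughly the same total $w$-weight, and this may require a preliminary regularization step that sparsifies $w$ on exceptional pairs before the nibble is invoked. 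Together, these two ingredients --- the codegree bound for $\ma{C}_n$ and the regularity of $w$ on $\ma{A}_n$ --- constitute the essential technical content of the argument.
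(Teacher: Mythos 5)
The proposal aims to show $\liminf \ge \limsup$ directly by taking a near-extremal $\mr{G}_3(e+2,e)$-free $3$-graph $\ma{H}_0$ on $[n_0]$ and transferring it to $[n]$ via a weighted conflict-free nibble on the Steiner triple system auxiliary hypergraph $\ma{A}_n$. This route has two gaps, each fatal.

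First, the encoding is too restrictive. Matchings of $\ma{A}_n$ are exactly partial Steiner triple systems, i.e., linear $3$-graphs, which have at most $\binom{n}{2}/3 \approx n^2/6$ edges. But $\mr{G}_3(e+2,e)$-freeness does not force linearity, and for $e\ge 3$ the extremal density strictly exceeds $1/6$: already for $e=3$ Glock showed $\lim_n f_3(n,5,3)/n^2 = 1/5 > 1/6$. So no conflict-free matching of $\ma{A}_n$, weighted or not, can attain $(\gamma-\ep)n^2$ edges. The actual GJKKLP machinery (Corollary~3.2 of \cite{Glock64}, quoted as \cref{lem:exist-0}) does not pack individual triples; it packs \emph{gadgets}, namely vertex-disjoint-on-pairs copies of a fixed seed $r$-graph $\ma{F}$, which is what allows the blow-up to exceed density $1/6$ and to land exactly at $|\ma{F}|/m^2$.

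Second, and more fundamentally, even after switching to a gadget-packing formulation, the seed cannot be taken to be an arbitrary near-extremal $\ma{H}_0$. \cref{lem:exist-0} requires the seed to be simultaneously $\mr{G}_3(e+2,e)$-free \emph{and} $\mr{G}_3(i+1,i)$-free for all $2\le i\le e-1$. These extra hypotheses are what guarantee that mixed configurations, drawing edges from several planted copies of the seed, do not violate $\mr{G}_3(e+2,e)$-freeness. A generic near-extremal $\ma{H}_0$ need not satisfy them, and the ``intricate case analysis on intersection patterns'' you gesture at would run exactly into this: without the extra freedom conditions the relevant conflict codegrees are genuinely too large. Bridging the gap between ``near-extremal $\mr{G}_3(e+2,e)$-free'' and ``near-extremal with all the lower-order freedom conditions'' is the actual content of Delcourt--Postle's proof and of this paper: here it is accomplished by the chain $\limsup f_r/n^2 \le \limsup f_r^{(e-1)}/n^2 \le \cdots \le \limsup f_r^{(2)}/n^2$ of \cref{lem:recursive}, proved via the structural lemma \cref{lem:structure} and the density increment lemma \cref{lem:crucial}. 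Your proposal treats the nibble as if it alone closes the loop, but it only gives the one inequality $\liminf f_r/n^2 \ge \limsup f_r^{(2)}/n^2$; the missing half of the argument is precisely the new work.
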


In 2019 Glock \cite{Glock-triple} solved the $e=3$ case of \cref{con:besconjecture} by showing that $\lim_{n\rightarrow\infty}\fr{f_3(n,5,3)}{n^2}=\fr{1}{5}$. In a recent work, Glock, Joos, Kim, K\"uhn, Lichev, and Pikhurko \cite{Glock64} solved the $e=4$ case of \cref{con:besconjecture} by showing that $\lim_{n\rightarrow\infty}\fr{f_3(n,6,4)}{n^2}=\fr{7}{36}$. Quite recently, based on a result of \cite{Glock64} Delcourt and Postle \cite{bes} solved \cref{con:besconjecture} in the full generality, without determining the limit precisely.

In the spirit of \eqref{eq:exp} and Conjecture \ref{con:besconjecture} Shangguan and Tamo \cite{Shangguan-Tamo-degenerate} posed the following conjecture, which largely generalizes \cref{con:besconjecture}:
\begin{conjecture}[see Question 2 in \cite{Shangguan-Tamo-degenerate}]\label{que:ST-1}
The limit $\lim_{n\rightarrow\infty}\frac{f_r(n,er-(e-1)k,e)}{n^k}$  exists for all fixed integers $r>k\ge 2,e\ge 3$.
\end{conjecture}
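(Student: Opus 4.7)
The plan is to extend the Delcourt-Postle argument (which handles $r=3,k=2$) and its $r\ge 3,k=2$ generalization established in this paper so as to cover every $k\ge 2$. The central goal is to prove, for the sequence $a_n:=f_r(n,er-(e-1)k,e)/n^k$, that every ``small'' scale $n_0$ serves as an asymptotic lower bound: for every $\ep>0$ and every $n_0\in\ml{N}$, there exists $N_0=N_0(n_0,\ep)$ such that $a_N\ge a_{n_0}-\ep$ whenever $N\ge N_0$. Granted this, one selects $n_0$ with $a_{n_0}\ge\limsup_{n\to\infty}a_n-\ep$ to obtain $\liminf_{n\to\infty}a_n\ge\limsup_{n\to\infty}a_n-2\ep$; letting $\ep\to 0$ yields $\liminf_{n\to\infty}a_n=\limsup_{n\to\infty}a_n$, i.e.\ the limit exists.

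The construction underlying the asymptotic lower bound starts from a near-extremal $\mr{G}_r(er-(e-1)k,e)$-free $r$-graph $\ma{H}_0$ on $[n_0]$ realizing close to $a_{n_0}\cdot n_0^k$ edges. Partition the larger ground set $[N]$ into clusters $V_1,\ld,V_t$ of size $\approx n_0$. On $[N]$ one then builds a candidate hypergraph $\ma{H}$ consisting of (i) a ``local'' isomorphic copy of $\ma{H}_0$ inside each $V_i$, and (ii) ``cross'' edges whose vertex set meets between $2$ and $k$ clusters. The cross component must asymptotically supply the correct edge count for each cluster-profile $(r_1,\ld,r_s)$ with $\sum_i r_i=r$ and $s\ge 2$, which reduces to producing near-optimal $s$-partite $\mr{G}_r(er-(e-1)k,e)$-free decompositions for every such profile. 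These can in principle be obtained by a R\"odl-nibble-style random greedy process combined with the iterative absorption framework and the $F$-design machinery of Glock, Joos, Kim, K\"uhn, Lichev and Pikhurko \cite{Glock64} exploited by Delcourt and Postle \cite{bes}.

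The principal obstacle, and the reason the conjecture is open beyond $k=2$, is certifying that the assembled hypergraph $\ma{H}$ really is $\mr{G}_r(er-(e-1)k,e)$-free. Any potential violation is a choice of $e$ edges whose union has at most $er-(e-1)k$ vertices, coming with a hybrid ``pattern'' that records how the edges are distributed among clusters and whether each edge is local or cross. Purely local patterns are ruled out by the freeness of $\ma{H}_0$, and purely cross patterns by the freeness of the cross-design, but mixed patterns---say, several local edges inside one cluster together with cross edges hitting that same cluster---force the cross-design to behave ``generically'' with respect to every fixed $\ma{H}_0$-copy. For $k=2$ there are only $O(1)$ mixed-pattern types and each admits a direct count; for $k\ge 3$ the number of profiles grows with $r$ and $e$, so one needs a unified mechanism---most likely a quasi-randomness condition imposed on the cross-designs during the nibble construction---that kills every hybrid obstruction simultaneously. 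Designing such a mechanism, and verifying that it is preserved through iterative absorption, is where I expect the bulk of the technical work to lie.
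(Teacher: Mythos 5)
This statement is Conjecture~\ref{que:ST-1}, an open problem that the paper does \emph{not} prove. The paper's contribution (Theorem~\ref{thm:main}) settles only the special case $k=2$, and the paper is explicit that the general $k\ge 3$ case remains open. Your proposal, being a sketch rather than a proof, is consistent with that state of affairs, but the comparison still has content.

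Your high-level strategy is genuinely different from the paper's. You propose to prove ``liminf $\ge$ limsup'' directly by a blowup construction: take a near-extremal $\ma{H}_0$ on $[n_0]$, replicate it inside clusters of $[N]$, and sew the clusters together with a quasi-random cross-design built by nibble and iterative absorption. The paper (following Delcourt--Postle and Glock--Joos--Kim--K\"uhn--Lichev--Pikhurko) instead proceeds by reduction: Lemma~\ref{lem:exist-0} says that $\liminf f_r/n^k$ is already bounded below by the density of any fixed $r$-graph that is simultaneously $\mr{G}_r(er-(e-1)k,e)$-free and $\mr{G}_r(ir-(i-1)k-1,i)$-free for all $2\le i\le e-1$, so the whole problem collapses to showing that an arbitrary $\mr{G}_r(er-(e-1)k,e)$-free $r$-graph can be trimmed to one with all the extra freeness conditions without losing density asymptotically (the chain \eqref{eq:recursive}). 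The paper's technical core is a structural lemma and a density-increment argument (Lemmas~\ref{lem:structure} and~\ref{lem:crucial}) that implement this trimming step by step, and that density calculation is exactly what breaks down for $k\ge 3$.

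The gap in your proposal is therefore more fundamental than you indicate. First, you cannot take $\ma{H}_0$ to be any near-extremal $\mr{G}_r(er-(e-1)k,e)$-free $r$-graph; the construction needs $\ma{H}_0$ to also be $\mr{G}_r(ir-(i-1)k-1,i)$-free for all $2\le i\le e-1$. Otherwise $\ma{H}_0$ already contains a ``dense'' sub-configuration on $\le ir-(i-1)k-1$ vertices, and any $e-i$ cross edges through the $(k-1)$-sets of that configuration (which generically exist once the cross-design has the right degree) complete it to a forbidden $e$-edge configuration on $\le er-(e-1)k$ vertices. No quasi-randomness of the cross-design will help, since the violation is forced by a \emph{local} defect of $\ma{H}_0$. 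The paper's $f_r^{(t)}$ chain is precisely the mechanism for eliminating those local defects while preserving density; you have replaced it with a vague appeal to quasi-randomness and iterative absorption, which addresses the wrong obstruction. Second, your characterization of why $k=2$ works --- ``$O(1)$ mixed-pattern types each admitting a direct count'' --- does not match the actual $k=2$ proof; the $k=2$ case was proved by the density-increment reduction, not by a pattern count in a blowup. If you want a route to $k\ge 3$, the right target is to extend Lemma~\ref{lem:crucial}'s density-increment estimate (where for $k\ge 3$ the removed sets $X_j$ have size growing with $t$, and the quadratic counting $e_j/v_j^2$ must be replaced by $e_j/v_j^k$, which the current inequalities do not support), not to re-derive the lower bound of Lemma~\ref{lem:exist-0} by a new construction.
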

\noindent In \cite{Shangguan-Tamo-degenerate}, such a limit, if it exists, is called
{\it the degenerate Tur\'an density of sparse hypergraphs}. Extending Glock's result from $r=3,k=2,e=3$ to arbitrary fixed $r\ge 4$, Shangguan and Tamo \cite{Shangguan-Tamo-degenerate} showed that
\begin{align*}
  \lim_{n\rightarrow\infty}\fr{f_r(n,3r-4,3)}{n^2}=\fr{1}{r^2-r-1}.
\end{align*}
Following this line of work, Glock, Joos, Kim, K\"uhn, Lichev, and Pikhurko \cite{Glock64} proved that
\begin{align}\label{eq:exact}
  \lim_{n\rightarrow\infty}\fr{f_r(n,3r-2k,3)}{n^k}=\fr{2}{k!(2\binom{r}{k}-1)}\quad\text{and}\quad\lim_{n\rightarrow\infty}\fr{f_r(n,4r-3k,4)}{n^k}=\fr{1}{k!}\binom{r}{k}^{-1},
\end{align}
thereby settling \cref{que:ST-1} completely for $e\in\{3,4\}$.

The main result of this paper is the following theorem, which settles \cref{que:ST-1} for $k=2$ and proves \cref{con:besconjecture} for every uniformity.

\begin{theorem}\label{thm:main}
The limit $\lim_{n\rightarrow\infty}\frac{f_r(n,er-2(e-1),e)}{n^2}$  exists for all fixed integers $r\ge 3$ and $e\ge 3$.
\end{theorem}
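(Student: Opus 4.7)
The sequence $a_n := f_r(n, er - 2(e-1), e)/n^2$ is uniformly bounded: any $e$ edges sharing a common pair would have union of size at most $2 + e(r-2) = er - 2(e-1)$, violating the $\mr{G}_r(er - 2(e-1), e)$-free condition, so every pair of vertices lies in at most $e - 1$ edges, giving $a_n \le \fr{e - 1}{r(r-1)}$ up to lower-order terms. Consequently, to prove the limit exists it suffices to establish the \emph{near-monotonicity} statement: for every $\varepsilon > 0$ there exists $n_0$ such that $a_N \ge a_n - \varepsilon$ for all $n \ge n_0$ and all $N$ sufficiently large in terms of $n$. The limit then equals $\sup_n a_n$. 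My plan is to prove near-monotonicity by adapting and generalising the conflict-free matching strategy of Delcourt and Postle \cite{bes} from $r = 3$ to arbitrary $r \ge 3$.

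Given a near-extremal $\mr{G}_r(v, e)$-free $r$-graph $\ma{H}^*$ on $[n]$ with $v = er - 2(e-1)$ and $|\ma{H}^*| \ge (a_n - \varepsilon/2)\, n^2$, the aim is to place many nearly disjoint images of $\ma{H}^*$ into $[N]$ without creating forbidden $e$-tuples. I would set up an auxiliary (hyper)graph $\ma{M}$ whose hyperedges encode injective placements $\phi : V(\ma{H}^*) \to [N]$ of individual edges of $\ma{H}^*$, and seek an almost-perfect matching of $\ma{M}$ that avoids a prescribed family of \emph{conflicts}, where an $e$-subset of placements is a conflict precisely when the $e$ resulting edges on $[N]$ span at most $v$ vertices. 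Applying the conflict-free almost-perfect matching theorem of Glock--Joos--Kim--K\"uhn--Lichev--Pikhurko \cite{Glock64} to this setup yields a $\mr{G}_r(v, e)$-free $r$-graph on $[N]$ with at least $(1 - o(1))\, a_n N^2$ edges, delivering the near-monotonicity bound.

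The main technical burden, and where the extension from $r = 3$ to general $r$ requires new work, is to verify the hypotheses of the conflict-free matching theorem. Near-regularity and small codegree of $\ma{M}$ itself are obtained from a routine analysis of uniform random placements, exploiting the bounded pair-degree of $\ma{H}^*$. The delicate step is to control, for every combinatorial \emph{type} of overlapping $e$-tuple of placements---equivalently, for every isomorphism class of $e$-edge $r$-graphs on at most $v$ vertices---the number of conflicts of that type that extend any prescribed sub-collection of placements. This ``conflict codegree'' must lie comfortably below the threshold permitted by \cite{Glock64}. For $r = 3$ the catalogue of relevant isomorphism classes is small and the estimates are nearly direct; for general $r$ the catalogue grows substantially with $r$ and $e$, and I expect the principal obstacle to be organising this case analysis into a single uniform bound valid across all types. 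Once this is in place, the conflict-free matching theorem applies verbatim and yields \cref{thm:main}.
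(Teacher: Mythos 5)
Your high-level reduction (boundedness plus near-monotonicity implies the limit exists and equals $\sup_n a_n$) is correct, and the idea of using a conflict-free packing theorem to transfer a good construction from $[n]$ to $[N]$ is indeed the engine behind the lower-bound side of the paper (via \cref{lem:exist-0}, which is Corollary~3.2 of \cite{Glock64}). But the plan as stated has a genuine gap: you apply the packing machinery directly to an arbitrary near-extremal $\mr{G}_r(v,e)$-free template $\ma{H}^*$, and this does not work. The template must in addition be $\mr{G}_r(ir-(i-1)k-1,i)$-free for every $2\le i\le e-1$ (this is exactly the hypothesis of \cref{lem:exist-0}). Without it, the conflict system is unsalvageable: for instance, if $\ma{H}^*$ contains $i$ edges spanning at most $ir-2(i-1)-1$ vertices and also $e-i$ edges spanning at most $(e-i)r-2(e-i-1)-1$ vertices (with $2\le i\le e-2$), then \emph{any} two vertex-disjoint placements of $\ma{H}^*$ already produce $e$ edges spanning at most $er-2(e-1)=v$ vertices. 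Every pair of placements is then in a conflict, so no conflict-free matching theorem applies — this is not a matter of organising a case analysis into uniform codegree bounds, as you suggest. The ``main technical burden'' you identify (cataloguing overlap types) is the routine part; the real obstacle is that an extremal $\ma{H}^*$ need not have the required sparsity on fewer than $e$ edges.

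The paper's entire contribution is to close exactly this gap. It introduces auxiliary quantities $f_r^{(t)}$ counting $r$-graphs that are $\mr{G}_r(v,e)$-free \emph{and} $\mr{G}_r(ir-2(i-1)-1,i)$-free for $t\le i\le e-1$ (plus a codegree condition), and proves the chain of inequalities
$\limsup f_r/n^2 \le \limsup f_r^{(e-1)}/n^2 \le \cdots \le \limsup f_r^{(2)}/n^2$
(\cref{lem:recursive}), while $\liminf f_r/n^2 \ge \limsup f_r^{(2)}/n^2$ comes from \cref{lem:exist-0}. The heart of the argument is a structural lemma (\cref{lem:structure}) describing how a bad $(t-1)$-edge configuration sits inside such a graph, together with a density-increment vertex-deletion process (\cref{lem:crucial}) showing one can excise all bad configurations while retaining a linear fraction of the vertices and \emph{non-decreasing} edge density $|\ma{F}|/|V(\ma{F})|^2$. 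None of this appears in your proposal; replacing it with ``verify codegree hypotheses for all overlap types'' will not succeed, because the hypotheses genuinely fail for a general extremal template.
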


The idea of our proof is sketched as follows. Let us begin with an attempt that aims to prove the general case of \cref{que:ST-1}. We will make use of the following result proved in \cite{Glock64}.

\begin{lemma}[see Corollary 3.2 in \cite{Glock64}]\label{lem:exist-0}
    Let $\ma{F}$ be an $m$-vertex $r$-graph which simultaneously $\mr{G}_r(er-(e-1)k,e)$-free and $\mr{G}_r(ir-(i-1)k-1,i)$-free for all $2\le i\le e-1$. Then  $$\liminf_{n\rightarrow\infty}\frac{f_r(n,er-(e-1)k,e)}{n^k}\ge\fr{|\ma{F}|}{m^k}.$$
\end{lemma}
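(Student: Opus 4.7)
Set $v:=er-2(e-1)$ and $L:=\limsup_{n\to\infty}f_r(n,v,e)/n^2$; by \eqref{eq:BESbound} $L$ is finite. Since $\liminf\le L$ trivially, it suffices to prove $\liminf\ge L$. The plan is to apply \cref{lem:exist-0} with $k=2$: for any fixed $\epsilon>0$ I would construct an $r$-graph $\ma{F}$ on some number $m$ of vertices which is (i) $\mr{G}_r(v,e)$-free, (ii) $\mr{G}_r(ir-2(i-1)-1,i)$-free for every $2\le i\le e-1$, and (iii) satisfies $|\ma{F}|/m^2\ge L-O(\epsilon)$. Given such $\ma{F}$, \cref{lem:exist-0} yields $\liminf\ge L-O(\epsilon)$, and sending $\epsilon\to 0$ gives the theorem.

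To produce $\ma{F}$, first fix $n$ large and an $n$-vertex $\mr{G}_r(v,e)$-free $r$-graph $\ma{H}$ with $|\ma{H}|\ge(L-\epsilon)n^2$; the task is then to \emph{clean} $\ma{H}$ by removing few edges so that the result also satisfies (ii). A useful structural consequence of $\mr{G}_r(v,e)$-freeness is the following codegree bound: for every $3$-subset $T\s V(\ma{H})$, at most $e-1$ edges of $\ma{H}$ contain $T$. Indeed, any $e$ edges through $T$ have union of size at most $3+e(r-3)=er-3(e-1)$, which for $e\ge 2$ is strictly less than $v+1=er-2(e-1)+1$, contradicting $\mr{G}_r(v,e)$-freeness. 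Analogous codegree bounds apply to the tight intersection patterns that underlie each ``bad'' $i$-configuration (an $i$-tuple of edges whose union has at most $ir-2(i-1)-1$ vertices), and together these let one bound the total number $N_i$ of bad $i$-configurations in $\ma{H}$ by sums of binomial expressions in these codegrees.

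The cleaning itself would then proceed via a probabilistic edge deletion: keep each edge of $\ma{H}$ independently with a carefully chosen probability $p$, and afterwards delete one edge from every surviving bad $i$-configuration. The main obstacle is that the codegree bounds, when pushed in the straightforward way, only yield $N_i=O(n^2)$, i.e.\ of the same order as $|\ma{H}|$ itself, so that naive sparsification-and-delete loses a constant fraction of edges and recovers only $\liminf\ge L-\Omega(1)$ rather than $\liminf\ge L-o(1)$. Overcoming this is the technical heart of the argument, and it is where the specialization $k=2$ is essential: the expected route is a stability/exchange argument showing that in any near-extremal $\ma{H}$ one in fact has $N_i=o(n^2)$ for every $2\le i\le e-1$ (a bad $i$-configuration witnesses a ``tight'' local inefficiency whose prevalence contradicts near-extremality in the $k=2$ regime, where the supplementary forbidden union sizes differ from the main BES threshold by only a single vertex). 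This is also precisely where the present paper must strengthen the $r=3$ Delcourt--Postle approach to every uniformity $r\ge 3$; once such an $o(n^2)$ bound on $N_i$ is established, the probabilistic deletion produces the desired $\ma{F}$ satisfying (i)--(iii), and the theorem follows from \cref{lem:exist-0}.
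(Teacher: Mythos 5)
You have not proved the statement you were asked to prove. \cref{lem:exist-0} is a conditional assertion: \emph{given} an $m$-vertex $r$-graph $\ma{F}$ which is simultaneously $\mr{G}_r(er-(e-1)k,e)$-free and $\mr{G}_r(ir-(i-1)k-1,i)$-free for all $2\le i\le e-1$, one must \emph{conclude} that $\liminf_{n\to\infty}f_r(n,er-(e-1)k,e)/n^k\ge|\ma{F}|/m^k$. A proof would need to turn the fixed clean hypergraph $\ma{F}$ into arbitrarily large $n$-vertex $\mr{G}_r(er-(e-1)k,e)$-free $r$-graphs of density approaching $|\ma{F}|/m^k$; the extra $\mr{G}_r(ir-(i-1)k-1,i)$-freeness hypotheses on $\ma{F}$ are exactly what makes such a construction (packing many nearly-disjoint copies of $\ma{F}$) possible without creating forbidden $i$-configurations across copies. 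Your proposal does none of this: it treats \cref{lem:exist-0} as a black box and sketches a different result, essentially $\liminf=\limsup$ for $k=2$, which is (up to packaging) \cref{thm:main}. Note also that the lemma is stated for every $k\ge 2$, while your write-up immediately fixes $k=2$.

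For completeness, the paper itself does not prove \cref{lem:exist-0}; it is imported as Corollary~3.2 of \cite{Glock64}. And even read as a sketch of \cref{thm:main}, your route (clean an extremal example, count bad $i$-configurations, appeal to an unproved stability claim that near-extremal hypergraphs have $o(n^2)$ bad $i$-configurations) diverges from the paper's argument, which instead introduces the intermediate extremal functions $f_r^{(t)}$ of \eqref{eq:property} and establishes the $\limsup$-chain of \cref{lem:recursive} by a local density-increment argument (\cref{lem:crucial}), requiring no stability input at all. The gap you flag in your own sketch is precisely the piece the paper's decomposition is designed to avoid, so even as a proposal for the main theorem it stops short of the key idea.
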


For $t\in\{2,\ldots,e-1\}$, let $f_r^{(t)}(n,er-(e-1)k,e)$ denote the maximum number of edges in an $n$-vertex $r$-graph which simultaneously satisfy all of the following properties:
\begin{align}\label{eq:property}
    \begin{split}
        \left \{
        \begin{array}{cc}
           \text{it is $\mr{G}_r(er-(e-1)k,e)$-free,
 and is $\mr{G}_r(ir-(i-1)k-1,i)$-free for all $t\le i\le e-1$;}  &  \\
            \text{for every $(k-1)$-subset $T$, the number of edges containing $T$ is either $0$ or at least $e$.} &
        \end{array}
        \right.
    \end{split}
\end{align}

\noindent Then it is clear that
\begin{align*}
    f_r(n,er-(e-1)k,e)\ge f_r^{(e-1)}(n,er-(e-1)k,e)\ge\cdots\ge f_r^{(2)}(n,er-(e-1)k,e)
\end{align*}
and hence
\begin{equation}\label{eq:onedirection}
\begin{aligned}
    \limsup_{n\rightarrow\infty}\frac{f_r(n,er-(e-1)k,e)}{n^k}&\ge\limsup_{n\rightarrow\infty}\frac{f_r^{(e-1)}(n,er-(e-1)k,e)}{n^k}\\
    &\ge\cdots\ge\limsup_{n\rightarrow\infty}\frac{f_r^{(2)}(n,er-(e-1)k,e)}{n^k}.
\end{aligned}
\end{equation}
Moreover, as an easy consequence of \cref{lem:exist-0} we have the following inequality:

\begin{equation}\label{eq:exist}
\begin{aligned}
    \liminf_{n\rightarrow\infty}\frac{f_r(n,er-(e-1)k,e)}{n^k}\ge \limsup_{n\rightarrow\infty}\frac{f_r^{(2)}(n,er-(e-1)k,e)}{n^k}.
\end{aligned}
\end{equation}
Therefore, to prove the existence of $\lim_{n\rightarrow\infty}\frac{f_r(n,er-(e-1)k,e)}{n^k}$, it suffices to show
\begin{equation}\label{eq:recursive}
\begin{aligned}
    \limsup_{n\rightarrow\infty}\frac{f_r(n,er-(e-1)k,e)}{n^k}&\le\limsup_{n\rightarrow\infty}\frac{f_r^{(e-1)}(n,er-(e-1)k,e)}{n^k}\\&\le\cdots\le\limsup_{n\rightarrow\infty}\frac{f_r^{(2)}(n,er-(e-1)k,e)}{n^k}.
\end{aligned}
\end{equation}

\noindent Indeed, it follows by \eqref{eq:onedirection} and \eqref{eq:recursive} that $\limsup_{n\rightarrow\infty}\frac{f_r(n,er-(e-1)k,e)}{n^k}=\limsup_{n\rightarrow\infty}\frac{f_r^{(2)}(n,er-(e-1)k,e)}{n^k}$, which together with \eqref{eq:exist} would imply that $\limsup_{n\rightarrow\infty}\frac{f_r(n,er-(e-1)k,e)}{n^k}=\liminf_{n\rightarrow\infty}\frac{f_r(n,er-(e-1)k,e)}{n^k}$, and hence the existence of the corresponding limit. However, we are only able to prove \eqref{eq:recursive} for $k=2$, stated as below.

\begin{lemma}\label{lem:recursive}
For all fixed $r\ge 3$ and $e\ge 3$,
\begin{align*}
    \limsup_{n\rightarrow\infty}\frac{f_r(n,er-2(e-1),e)}{n^2}&\le\limsup_{n\rightarrow\infty}\frac{f_r^{(e-1)}(n,er-2(e-1),e)}{n^2}\\&\le\cdots\le\limsup_{n\rightarrow\infty}\frac{f_r^{(2)}(n,er-2(e-1),e)}{n^2}.
\end{align*}
\end{lemma}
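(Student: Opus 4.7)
I would prove the chain of inequalities link by link. For each $t\in\{2,\ldots,e-1\}$, with the convention $f_r^{(e)}:=f_r$, the plan is: given any $n$-vertex $r$-graph $\ma{H}$ satisfying the $f_r^{(t+1)}$-conditions \eqref{eq:property}, delete at most $o(n^2)$ edges to produce $\ma{H}'$ also satisfying $\mr{G}_r(tr-2(t-1)-1,t)$-freeness (and hence the full $f_r^{(t)}$-conditions). This gives $f_r^{(t+1)}(n,er-2(e-1),e)\le f_r^{(t)}(n,er-2(e-1),e)+o(n^2)$, which on dividing by $n^2$ and passing to $\limsup$ yields the $t$th link.

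The first link ($t=e-1$, starting from the unconstrained $f_r$) is clean. Iteratively delete every vertex of positive degree less than $e$ (loss $O(n)$) to enforce the degree condition, and observe that the resulting $\ma{H}'$ is automatically $\mr{G}_r((e-1)r-2(e-2)-1,e-1)$-free: otherwise a bad $(e-1)$-configuration $B$ with union $S$ of size at most $(e-1)r-2(e-2)-1$ would, by the $\mr{G}_r(er-2(e-1),e)$-freeness applied to $B\cup\{A\}$ for arbitrary $A\in\ma{H}'\setminus B$, force
\[
er-2(e-1)+1\le|S\cup A|=|S|+r-|A\cap S|\le er-2(e-1)+1-|A\cap S|,
\]
hence $|A\cap S|\le 0$; then every $v\in S$ has all its incident edges in $B$, so $\deg_{\ma{H}'}(v)\le e-1$, contradicting the degree condition. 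The same style of argument in the inductive links ($2\le t\le e-2$) — now using $\mr{G}_r((t+1)r-2t-1,t+1)$-freeness of $\ma{H}$ — yields only the weaker local rigidity
\[
|A\cap S|\le 1\ \text{ for every bad $t$-configuration $B\subseteq\ma{H}$ with union $S$ and every $A\in\ma{H}\setminus B$,}
\]
so that $\ma{H}[S]=B$ and every pair $\{u,v\}\subseteq S$ has codegree at most $d_B(\{u,v\})\le t$ in $\ma{H}$, with all witnesses lying inside $B$.

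The hard step is converting this local rigidity into an $o(n^2)$ hitting set for the bad $t$-configurations. My plan is to assign to each such configuration a compact witness. Since $\sum_{v\in S}(d_B(v)-1)=tr-|S|\ge 2t-1$, every bad $t$-configuration contains vertices of $d_B$-degree $\ge 2$, and for small $t$ even pairs $\{u,v\}\subseteq S$ with $d_B(\{u,v\})\ge 2$ — i.e., pairs of $\ma{H}$-edges meeting in $\ge 2$ vertices. Iteratively exploiting the freenesses $\mr{G}_r(ir-2(i-1)-1,i)$ for $i=t+1,\ldots,e-1$ together with the degree condition and the above rigidity, one aims to bound the number of such witnesses — and hence a corresponding hitting set — by $o(n^2)$. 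Removing one representative edge per witness then produces the desired $\ma{H}'$ with loss $o(n^2)$. This subquadratic bound is the main obstacle and is where $k=2$ is essential: the degree condition lives at single vertices and the rigidity $|A\cap S|\le 1$ is tailored to pair-level interactions, while for general $k$ the analogous count over $(k-1)$-subsets appears out of reach, explaining why the present argument does not extend beyond $k=2$.
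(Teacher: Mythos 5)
Your argument for the first link (from $f_r$ to $f_r^{(e-1)}$) is correct and matches the paper in substance: the paper's Lemma~\ref{lem:easy} removes at most $(e-1)n$ edges to enforce the degree condition, and Lemma~\ref{lem:case1} then shows the cleaned graph is automatically $\mr{G}_r((e-1)r-2(e-2)-1,e-1)$-free; your $|A\cap S|=0$ derivation is a clean contrapositive of the paper's extension argument, and your structural observations for the inductive links (namely $|A\cap S|\le 1$, which is Lemma~\ref{lem:structure}(ii) with $k=2$) are also exactly what the paper proves.

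However, for the remaining links your plan is to establish $f_r^{(t+1)}(n,\cdot)\le f_r^{(t)}(n,\cdot)+o(n^2)$ by finding an $o(n^2)$-edge hitting set for the bad configurations, and this is where the proposal has a genuine gap, one you yourself flag as ``the main obstacle.'' This is not a mere technical hurdle: a graph in the $f_r^{(t)}$-class can have $\Theta(n)$ pairwise far-apart bad $(t-1)$-configurations, each surrounded (via the set $\ma{I}(X)$ of Lemma~\ref{lem:structure}(iii)) by up to $\Theta(n)$ edges meeting its union in exactly one vertex, so the total number of edges that must be touched to kill all bad configurations can be $\Theta(n^2)$. Consequently no link beyond the first can be proved by an additive $o(n^2)$ comparison, and the paper indeed does not attempt one. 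What the paper does instead (Lemmas~\ref{lem:crucial}, \ref{lem:packing}, \ref{lem:packing-modified} and the proof at the end of Section~\ref{sub:remaining}) is a density-increment plus compactness argument: upon finding a bad $(t-1)$-configuration with union $X$, delete $X$ from the vertex set \emph{and} delete all edges meeting $X$ (which by the structure lemma is fewer than $v/(r-2)$ edges); a calculation using the codegree upper bound $\deg(T)<n/(r-2)$ shows the edge density on the shrinking vertex set does not decrease, and the process must stop while at least $\alpha n$ vertices remain. Feeding this into a contradiction argument against the maximality of $n_{t-1}=\max\ma{I}_{t-1}$, together with the dense near-Steiner construction from \cite{Delcourtconflict,Glockconflict} which guarantees the density is above the threshold needed for Lemma~\ref{lem:crucial}, yields $a_t\le a_{t-1}$ without ever producing a subquadratic hitting set. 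Relatedly, the reason $k=2$ is essential in the paper is not the subquadratic bound you identify but that the density-increment inequality \eqref{eq:ratio}--\eqref{eq:y}, which compares $e_j/v_j^2$ to $e_0/v_0^2$, only closes for $k=2$.
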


In Delcourt and Postle's solution to \cref{con:besconjecture}, their proof was based on a result which is similar to \cref{lem:recursive} for the special case $r=3$. Using our notation, they essentially proved that $\limsup_{n\rightarrow\infty}\frac{f_3(n,e+2,e)}{n^2}\le\limsup_{n\rightarrow\infty}\frac{f^{(2)}_3(n,e+2,e)}{n^2}$ (see Theorem 1.7 in \cite{bes}). To prove that result, they first proved a structural lemma for $\ma{G}_3(e+2,e)$-free 3-graphs which are not $\ma{G}_3(i+1,i)$-free for some $2\le i\le e+1$ (see Lemma 2.1 in \cite{bes}). Our proof of \cref{lem:recursive} is inspired by their approach. At the core of our proof is a more powerful structural lemma (see \cref{lem:structure} below) for $r$-graphs that satisfy the assumptions of \eqref{eq:property} but are not $\mr{G}_r((t-1)r-(t-2)k-1,t-1)$-free. Based on this new structural lemma and a density increment argument, we are able to show that for $k=2$, all such $r$-graphs must contain a sufficiently large $\mr{G}_r((t-1)r-(t-2)k-1,t-1)$-free subhypergraph with non-decreasing edge density (see \cref{lem:crucial} below).  This enables us to prove the inequality $\limsup_{n\rightarrow\infty}\frac{f_r^{(t)}(n,er-2(e-1),e)}{n^2}\le\limsup_{n\rightarrow\infty}\frac{f_r^{(t-1)}(n,er-2(e-1),e)}{n^2}$ for every fixed $3\le t\le e-1$. We suspect that the above assertion holds for all $k\ge 3$. If so, it would settle \cref{que:ST-1} in the full generality.

The rest of this paper is organized as follows. The proof of \cref{lem:recursive} is presented in \cref{sec:recursive}. In \cref{sub:first} we will prove the first inequality of \cref{lem:recursive}. In \cref{sub:struc} we will state and prove our structural and density lemmas (see \cref{lem:structure} and \cref{lem:crucial} below). In \cref{sub:remaining} we will prove the remaining inequalities of \cref{lem:recursive}. 


\section{Proof of \cref{lem:recursive}}\label{sec:recursive}
\subsection{The first inequality}\label{sub:first}
In this subsection, we will prove the first inequality of \cref{lem:recursive}, and in fact we can prove it for every $k\ge 2$. Let us begin with some notations. For a positive integer $n$, let $[n]=\{1,\ldots,n\}$. For a finite set $X$ and a positive integer $r$, let $\binom{X}{r}$ denote the collection of all distinct $r$-element subsets of $X$. Therefore, $\binom{[n]}{r}=\{A\subseteq [n]:|A|=r\}$. For an $r$-graph $\ma{F}\subseteq\binom{[n]}{r}$ and a subset $T\subseteq[n]$, the {\it codegree} of $T$ in $\ma{F}$, denoted by $\deg_{\ma{F}}(T)$, is the number of edges in $\ma{H}$ which contain $T$ as a subset, i.e., $\deg_{\ma{F}}(T)=|\{A\in\ma{F}:T\subseteq A\}|.$

The following lemma shows that any $r$-graph can be made to have no $(k-1)$-subset of codegree belonging to $\{1,\ldots,e-1\}$ by deleting at most $(e-1)\binom{n}{k-1}$ of its edges.

\begin{lemma}\label{lem:easy}
  For every $r$-graph $\ma{F}\subseteq\binom{[n]}{r}$ there exists a subhypergraph $\ma{F}'\subseteq\ma{F}$ such that $|\ma{F}'|\ge\max\{0,|\ma{F}|-(e-1)\binom{n}{k-1}\}$ and for every $T\in\binom{[n]}{k-1}$, we have either $\deg_{\ma{F}'}(T)=0$ or $\deg_{\ma{F}'}(T)\ge e$.
\end{lemma}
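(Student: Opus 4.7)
The plan is to construct $\ma{F}'$ by a simple iterative deletion procedure. Set $\ma{F}_0:=\ma{F}$. At step $i\ge 0$, search for some $(k-1)$-subset $T_i\in\binom{[n]}{k-1}$ whose codegree in the current hypergraph $\ma{F}_i$ lies in the forbidden range $\{1,\ldots,e-1\}$; if one exists, remove all (at most $e-1$) edges of $\ma{F}_i$ containing $T_i$ to form $\ma{F}_{i+1}$, and otherwise halt and output $\ma{F}':=\ma{F}_i$. Termination is immediate because $|\ma{F}_i|$ strictly decreases at each step while remaining non-negative, and at termination every $(k-1)$-subset has codegree either $0$ or at least $e$ in $\ma{F}'$ by the definition of the stopping rule, giving exactly the structural property claimed.

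The key accounting observation is that each $(k-1)$-subset $T\in\binom{[n]}{k-1}$ can be chosen as the pivot $T_i$ at most once during the entire process. Indeed, immediately after $T$ is used we have $\deg_{\ma{F}_{i+1}}(T)=0$, and since the hypergraph only shrinks from this point on, $\deg_{\ma{F}_j}(T)=0$ for all $j\ge i+1$, so $T$ is never again a valid choice. Each use of a pivot removes at most $e-1$ edges, and there are only $\binom{n}{k-1}$ candidate pivots, so the total number of edges deleted throughout the procedure is at most $(e-1)\binom{n}{k-1}$. Consequently $|\ma{F}'|\ge |\ma{F}|-(e-1)\binom{n}{k-1}$, which together with the trivial bound $|\ma{F}'|\ge 0$ yields the required inequality.

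If $|\ma{F}|\le (e-1)\binom{n}{k-1}$, the bound $\max\{0,|\ma{F}|-(e-1)\binom{n}{k-1}\}=0$ is satisfied automatically, and the codegree condition holds vacuously once $\ma{F}'$ becomes empty (at the latest). There is no real obstacle to be overcome here: the statement is a textbook greedy saturation, and the only subtle point worth flagging is that although deleting edges containing some pivot $T_i$ can simultaneously drop the codegree of many other $(k-1)$-subsets into the bad range $\{1,\ldots,e-1\}$, no $(k-1)$-subset ever gets ``reactivated'' after being used once, so the naive budget of $(e-1)$ per element of $\binom{[n]}{k-1}$ is enough to finish.
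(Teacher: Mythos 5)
Your proof is correct and follows essentially the same greedy deletion strategy as the paper; the only cosmetic difference is that you remove all edges through a bad pivot $T_i$ in one batch, whereas the paper removes one edge at a time and charges each deletion to a responsible $(k-1)$-subset, but both arguments bound the total deletions by $(e-1)\binom{n}{k-1}$ in the same way.
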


\begin{proof}
  Successively remove the edges of $\ma{F}$ which contain at least one $(k-1)$-subset of codegree at least one and at most $e-1$, where the codegree is counted with respect to the yet unmoved edges of $\ma{F}$. Let $A_i$ be the $i$-th removed edge, and $T_i$ be some $(k-1)$-subset of codegree at least one and at most $e-1$ contained in $A_i$. In such a case we say that $A_i$ is removed due to $T_i$ and $T_i$ is responsible for $A_i$. On one hand, every $(k-1)$-subset of $[n]$ can be responsible for at most $e-1$ edges, since otherwise it would have codegree at least $e$ and could not be responsible for any removed edge in the beginning. On the other hand, every removed edge is removed due to some edge. Therefore we conclude that the removal process must terminate after at most $(e-1)\binom{n}{k-1}$ edge removals. Note that the resulting $r$-graph is possibly empty.
\end{proof}

The next lemma proves a lower bound of $f_r^{(e-1)}(n,er-(e-1)k,e)$ in terms of $f_r(n,er-(e-1)k,e)$.

\begin{lemma}\label{lem:case1}
    $f_r(n,er-(e-1)k,e)-(e-1)\binom{n}{k-1}\le f_r^{(e-1)}(n,er-(e-1)k,e)).$
\end{lemma}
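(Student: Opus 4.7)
The plan is to start from an extremal $\mr{G}_r(er-(e-1)k,e)$-free $r$-graph $\ma{F}\s\binom{[n]}{r}$ with $|\ma{F}|=f_r(n,er-(e-1)k,e)$, apply \cref{lem:easy} to obtain a subhypergraph $\ma{F}'\s\ma{F}$ satisfying $|\ma{F}'|\ge|\ma{F}|-(e-1)\binom{n}{k-1}$ in which every $(k-1)$-subset has codegree $0$ or $\ge e$, and then verify that $\ma{F}'$ witnesses the lower bound on $f_r^{(e-1)}(n,er-(e-1)k,e)$. Since $\ma{F}'\s\ma{F}$, the $\mr{G}_r(er-(e-1)k,e)$-freeness is inherited for free, and the codegree property is precisely what \cref{lem:easy} delivers. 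So the only non-trivial thing to check is that $\ma{F}'$ is also $\mr{G}_r((e-1)r-(e-2)k-1,e-1)$-free, which is the sole additional constraint when $t=e-1$ in \eqref{eq:property}.

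For that verification I would argue by contradiction: suppose there exist distinct edges $A_1,\ldots,A_{e-1}\in\ma{F}'$ whose union $U:=A_1\cup\cdots\cup A_{e-1}$ satisfies $|U|\le (e-1)r-(e-2)k-1$. I would like to produce a further edge $A_e\in\ma{F}'\setminus\{A_1,\ldots,A_{e-1}\}$ with $|A_e\cap U|\ge k-1$, because then
\[
|U\cup A_e|\le |U|+(r-(k-1))\le (e-1)r-(e-2)k-1+r-k+1=er-(e-1)k,
\]
which contradicts the $\mr{G}_r(er-(e-1)k,e)$-freeness of $\ma{F}'$. To find such an $A_e$, pick any $(k-1)$-subset $T\s A_1\s U$. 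Since $T$ sits inside the edge $A_1\in\ma{F}'$, its codegree in $\ma{F}'$ is non-zero, hence $\deg_{\ma{F}'}(T)\ge e$ by the codegree property. Among the at least $e$ edges of $\ma{F}'$ containing $T$, at most $e-1$ lie in $\{A_1,\ldots,A_{e-1}\}$, so at least one such edge $A_e$ is distinct from all the $A_i$'s and contains $T$, giving $|A_e\cap U|\ge k-1$ as required.

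Combining the two freeness conditions with the codegree condition shows that $\ma{F}'$ is a valid competitor in the definition of $f_r^{(e-1)}(n,er-(e-1)k,e)$, and therefore
\[
f_r^{(e-1)}(n,er-(e-1)k,e)\ge |\ma{F}'|\ge |\ma{F}|-(e-1)\tbinom{n}{k-1}=f_r(n,er-(e-1)k,e)-(e-1)\tbinom{n}{k-1},
\]
which is exactly the desired inequality. The main potential obstacle is verifying the $\mr{G}_r((e-1)r-(e-2)k-1,e-1)$-freeness of $\ma{F}'$, but the codegree threshold supplied by \cref{lem:easy} is tailored exactly so that the short counting argument above goes through, and the numerology $(e-1)r-(e-2)k-1+r-k+1=er-(e-1)k$ makes the argument tight.
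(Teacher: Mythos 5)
Your argument is essentially identical to the paper's: both apply \cref{lem:easy}, then verify the only nontrivial condition---$\mr{G}_r((e-1)r-(e-2)k-1,e-1)$-freeness of $\ma{F}'$---by picking a $(k-1)$-subset $T\subseteq A_1$, invoking $\deg_{\ma{F}'}(T)\ge e$ to find a fresh edge $A_e\supseteq T$, and deriving $|\cup_{i=1}^e A_i|\le er-(e-1)k$, contradicting $\mr{G}_r(er-(e-1)k,e)$-freeness. The only cosmetic difference is that you start from an extremal $\ma{F}$ while the paper takes an arbitrary one; the content is the same.
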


\begin{proof}
Let $\ma{F}$ be an arbitrary $n$-vertex $\mr{G}_r(er-(e-1)k,e)$-free $r$-graph. Let $\ma{F}'$ be the subhypergraph of $\ma{F}$ given by \cref{lem:easy}. To prove the lemma, it suffices to show that $\ma{F}'$ satisfies the condition of \eqref{eq:property} for $t=e-1$. Therefore, it is enough to show that $\ma{F}'$ is $\mr{G}_r((e-1)r-(e-2)k-1,e-1)$-free.

Assume otherwise, then there exist distinct edges $A_1,\ldots,A_{e-1}\in\ma{F}'$ such that $|\cup_{i=1}^{e-1} A_i|\le (e-1)r-(e-2)k-1$. Let $T$ be an $(k-1)$-subset of $A_1$. According to \cref{lem:easy}, there exists $A_e\in\ma{F}'\setminus\{A_1,\ldots,A_{e-1}\}$ such that $T\subseteq A_e$. It follows that
    \begin{align*}
        |\cup_{i=1}^e A_i|\le |\cup_{i=1}^{e-1} A_i|+|A_e\setminus A_1|\le \big((e-1)r-(e-2)k-1\big)+\big(r-k+1\big)=er-(e-1)k,
    \end{align*}
    contradicting the fact that $\ma{F}'$ is $\mr{G}_r(er-(e-1)k,e)$-free.
\end{proof}

\begin{proof} [Proof of the first inequality of \cref{lem:recursive}]
According to the lemma above, it is easy to see that
\begin{align}\label{eq:ineq-1}
    \limsup_{n\rightarrow\infty}\frac{f_r(n,er-(e-1)k,e)}{n^k}\le\limsup_{n\rightarrow\infty}\frac{f_r^{(e-1)}(n,er-(e-1)k,e)}{n^k}.
\end{align}
Setting $k=2$ in \eqref{eq:ineq-1} proves the first inequality of \cref{lem:recursive}. 
\end{proof}

\subsection{The structural and density lemmas}\label{sub:struc}

First of all let us mention the following easy lemma.

\begin{lemma}\label{lem:upp}
Let $n$ be sufficiently large enough as a function of $e$ and $r$. Then
\begin{itemize}
    \item [{\rm (i)}] $f_r(n,er-(e-1),e)<\frac{n}{r-1}$;
    \item [{\rm (ii)}] if $\ma{F}\subseteq\binom{[n]}{r}$ is $\mr{G}_r(er-(e-1)k,e)$-free, then for every $T\in\binom{[n]}{k-1}$, $\deg_{\ma{F}}(T)<\frac{n}{r-k}$. 
\end{itemize}
\end{lemma}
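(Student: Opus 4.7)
The plan is to derive both parts in quick succession, with part (i) following from the already-stated limit \eqref{eq:k=1} and part (ii) reducing to part (i) via a standard link-hypergraph argument.

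For part (i), I would simply invoke \eqref{eq:k=1}, which asserts that $f_r(n,er-(e-1),e)/n$ converges to $\frac{e-1}{(e-1)(r-1)+1}$. Since $(e-1)(r-1) < (e-1)(r-1)+1$, this limiting constant is strictly less than $\frac{1}{r-1}$, so for all sufficiently large $n$ (depending on $e$ and $r$) the strict inequality $f_r(n,er-(e-1),e) < \frac{n}{r-1}$ must hold.

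For part (ii), the plan is to fix an arbitrary $(k-1)$-subset $T \in \binom{[n]}{k-1}$ and consider the link hypergraph
\[
\ma{F}_T := \{A \setminus T : A \in \ma{F}, \ T \subseteq A\}
\]
defined on the ground set $[n] \setminus T$ of size $n-k+1$. Note that $\ma{F}_T$ is an $(r-k+1)$-uniform hypergraph with $|\ma{F}_T| = \deg_{\ma{F}}(T)$. The key observation is that $\ma{F}_T$ inherits a sparsity property of the correct parameters: if $B_1,\ldots,B_e$ are distinct edges in $\ma{F}_T$, arising from distinct $A_1,\ldots,A_e \in \ma{F}$ all containing $T$, then
\[
\Big|\bigcup_{i=1}^{e} B_i\Big| \;=\; \Big|\bigcup_{i=1}^{e} A_i\Big| - (k-1) \;\ge\; \big(er-(e-1)k + 1\big) - (k-1) \;=\; (r-k+1)e - (e-1) + 1,
\]
using that $\ma{F}$ is $\mr{G}_r(er-(e-1)k,e)$-free. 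Hence $\ma{F}_T$ is $\mr{G}_{r-k+1}\big((r-k+1)e-(e-1),\,e\big)$-free, which is precisely the setting of part (i) with $r$ replaced by $r-k+1$ on $n-k+1$ vertices.

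Applying part (i) (valid for $n$ sufficiently large, since $r-k+1 \ge 2$ under the standing assumption $k \le r-1$) yields
\[
\deg_{\ma{F}}(T) \;=\; |\ma{F}_T| \;\le\; f_{r-k+1}\big(n-k+1,\,(r-k+1)e-(e-1),\,e\big) \;<\; \frac{n-k+1}{r-k} \;<\; \frac{n}{r-k},
\]
which completes the proof. There is no real obstacle here: the only thing to verify carefully is the arithmetic in the link argument, namely that removing the $k-1$ common vertices of $T$ from a union of $e$ edges drops the threshold from $er-(e-1)k+1$ to $(r-k+1)e-(e-1)+1$, which is exactly what we need.
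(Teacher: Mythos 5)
Your proof is correct and follows the same approach as the paper: part (i) from the limit \eqref{eq:k=1} together with the elementary inequality $\frac{e-1}{(e-1)(r-1)+1}<\frac{1}{r-1}$, and part (ii) by passing to the link $(r-k+1)$-graph of $T$ and invoking part (i) with $r$ replaced by $r-k+1$. You supply the arithmetic check that the paper leaves implicit as ``straightforward to verify,'' but the underlying argument is identical.
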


\begin{proof}
   The first inequality is an easy consequence of \eqref{eq:k=1}. To prove the second inequality, it is straightforward to  verify by definition that for every $T\in\binom{n}{k-1}$, the $(r-k+1)$-graph $\{A\setminus T:A\in\ma{F}\}$ defined on the vertex set $[n]\setminus T$ is $\mr{G}_{r-k+1}(e(r-k+1)-(e-1),e)$-free, and therefore the result follows by (i).
\end{proof}

Next we present a structural lemma for $\mr{G}_r(er-(e-1)k,e)$-free $r$-graphs which are not $\mr{G}_r(ir-(i-1)k-1,i)$-free for some $2\le i\le e-1$.

\begin{lemma}\label{lem:structure}
    Let $t\ge 3$ and $\ma{F}\subseteq\binom{[n]}{r}$ be an $r$-graph satisfying the assumptions of \eqref{eq:property}.
    If it is not $\mr{G}_r((t-1)r-(t-2)k-1,t-1)$-free, i.e., there are $t-1$ distinct edges $A_1,\ldots,A_{t-1}\in\ma{F}$ such that $|\cup_{i=1}^{t-1} A_i|\le (t-1)r-(t-2)k-1$, then the following hold:
    \begin{itemize}
        \item [{\rm (i)}] let $X= \cup_{i=1}^{t-1} A_i$, then $|X|=(t-1)r-(t-2)k-1$;
        \item [{\rm (ii)}] for every $A\in\ma{F}\setminus\{A_1,\ldots,A_{t-1}\}$, $|A\cap X|\le k-1$;
        \item [{\rm (iii)}] let $\ma{I}(X)=\big\{A\in\ma{F}\setminus\{A_1,\ldots,A_{t-1}\}:|A\cap X|= k-1\big\}$, then
        $$|\ma{I}(X)|\le \max\big\{e-t+1,f_{r-k+1}(n-|X|,(e-t+1)(r-k+1)-(e-t),e-t+1)\big\}.$$ In particular, if $n$ is large enough as a function of $r$ and $e$, then $|\ma{I}(X)|<\frac{n-|X|}{r-k}$.
    \end{itemize}
\end{lemma}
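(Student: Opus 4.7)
The plan is to prove all three parts by the same mechanism: assume the conclusion fails, extend the configuration to a collection of $e$ distinct edges whose union is at most $er-(e-1)k$ vertices (or, when only $i$ edges are needed with $t\le i\le e-1$, at most $ir-(i-1)k-1$ vertices), and derive a contradiction with the relevant freeness assumption. Property (b) of \eqref{eq:property} supplies the required extra edges.

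For (i), suppose for contradiction $|X|\le(t-1)r-(t-2)k-2$. Fix any $(k-1)$-subset $T\subseteq A_1$; by (b) there is an $A_t\in\ma{F}\setminus\{A_1,\dots,A_{t-1}\}$ with $T\subseteq A_t$, and then $|X\cup A_t|\le|X|+(r-k+1)\le tr-(t-1)k-1$. When $t\le e-1$ this already violates $\mr{G}_r(tr-(t-1)k-1,t)$-freeness; when $t=e$, the bound $er-(e-1)k-1$ together with $A_1,\dots,A_{t-1}$ violates $\mr{G}_r(er-(e-1)k,e)$-freeness. For (ii), an analogous one-step computation: if some $A\in\ma{F}\setminus\{A_1,\dots,A_{t-1}\}$ has $|A\cap X|\ge k$, then $|A\cup X|\le|A|+|X|-k\le tr-(t-1)k-1$, and the $t$ edges $A_1,\dots,A_{t-1},A$ produce the same contradiction in the two ranges of $t$.

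For (iii), the natural move is to project $\ma{I}(X)$ onto $[n]\setminus X$ via $A\mapsto A\setminus X$, yielding an $(r-k+1)$-uniform hypergraph $\ma{H}$ on $[n]\setminus X$. A direct pullback argument shows $\ma{H}$ is $\mr{G}_{r-k+1}((e-t+1)(r-k+1)-(e-t),e-t+1)$-free: if $e-t+1$ distinct $B_j\in\ma{H}$ had union at most $(e-t+1)(r-k+1)-(e-t)$, pulling them back (writing each as $A_j^{*}=T_j\cup B_j$ with $T_j\subseteq X$) and appending $A_1,\dots,A_{t-1}$ would produce $e$ distinct edges of $\ma{F}$ with union $|X|+|\bigcup_j B_j|$, and the algebraic identity $(t-1)r-(t-2)k-1+(e-t+1)(r-k+1)-(e-t)=er-(e-1)k$ makes this at most $er-(e-1)k$, contradicting the top-level freeness.

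The main obstacle, and where the auxiliary $\mr{G}_r(ir-(i-1)k-1,i)$-freeness assumptions really come into play, is showing that the projection is injective on $\ma{I}(X)$, so that $|\ma{I}(X)|=|\ma{H}|$. If $C_1\ne C_2$ in $\ma{I}(X)$ shared $C_1\setminus X=C_2\setminus X=:B$, then $\bigcup_{i=1}^{t-1}A_i\cup C_1\cup C_2=X\cup B$ would have exactly $tr-(t-1)k=(t+1)r-tk-(r-k)$ vertices, hence at most $(t+1)r-tk-1$ since $r>k$. This violates $\mr{G}_r((t+1)r-tk-1,t+1)$-freeness when $t+1\le e-1$ and violates $\mr{G}_r(er-(e-1)k,e)$-freeness when $t+1=e$; the remaining case $t=e$ is handled by the argument of (ii), which already forces $|\ma{I}(X)|=0$ and is absorbed by the $e-t+1$ term in the $\max$. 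Finally, the bound $|\ma{I}(X)|<(n-|X|)/(r-k)$ for large $n$ follows from \cref{lem:upp}(i) applied with the parameter substitution $r\mapsto r-k+1$ and $e\mapsto e-t+1$, under which $(e-t+1)(r-k+1)-(e-t)$ takes precisely the form $e'r'-(e'-1)$ required there.
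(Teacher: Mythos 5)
Your proof is correct and follows essentially the same approach as the paper: parts (i) and (ii) extend the bad configuration by one edge (using the codegree property for (i)) to contradict the $\mr{G}_r(tr-(t-1)k-1,t)$-freeness from \eqref{eq:property}, and part (iii) projects $\ma{I}(X)$ onto $[n]\setminus X$, shows the projection is injective by invoking $\mr{G}_r((t+1)r-tk-1,t+1)$-freeness, shows the image is $\mr{G}_{r-k+1}\big((e-t+1)(r-k+1)-(e-t),e-t+1\big)$-free via the identity $|X|+(e-t+1)(r-k+1)-(e-t)=er-(e-1)k$, and finally applies \cref{lem:upp}(i) with $r'=r-k+1$, $e'=e-t+1$. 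The only departure is that you spell out the boundary cases $t=e-1$ and $t=e$, which the paper's proof glosses over since its application always has $3\le t\le e-1$; your case-split is harmless and, if anything, makes the dependence on which freeness assumption is invoked clearer. One small inaccuracy: for $t=e$ you say $|\ma{I}(X)|=0$ is ``handled by the argument of (ii),'' but (ii) only gives $|A\cap X|\le k-1$; the emptiness of $\ma{I}(X)$ when $t=e$ is the separate (and correct) observation that $|A\cap X|=k-1$ would make $A_1,\dots,A_{e-1},A$ span at most $er-(e-1)k$ vertices.
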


\begin{proof}
    (i) For the ease of notation let $x=(t-1)r-(t-2)k-1$. Assume for contradiction that $|X|\le x-1$. Let $T$ be an $(k-1)$-subset of $A_1$. As $\deg_{\ma{F}}(T)\ge e$ there exists $A_t\in\ma{F}\setminus\{A_1,\ldots,A_{t-1}\}$ such that $T\subseteq A_t$. It follows that
    \begin{align*}
        |\cup_{i=1}^t A_i|\le |X|+|A_t\setminus A_1|\le (x-1)+(r-k+1)=tr-(t-1)k-1,
    \end{align*}
   violating the assumption that $\ma{F}$ is $\mr{G}_r(tr-(t-1)k-1,t)$-free.

    (ii) If there exists $A_t\in\ma{F}\setminus\{A_1,\ldots,A_{t-1}\}$ such that $|A_t\cap X|\ge k$, then similar to the proof of (i), it is clear that
    $$|\cup_{i=1}^{t} A_i|\le |X|+|A_t\setminus X|\le x+(r-k)=tr-(t-1)k-1,$$ a contradiction.

    (iii) Assume that $|\ma{I}(X)|\ge e-t+1$, otherwise there is nothing to prove. As $\ma{F}$ is $\mr{G}_r(er-(e-1)k,e)$-free, for arbitrary $e-t+1$ distinct edges $A_t,\ldots,A_{e}\in\ma{I}(X)$ we have
    \begin{align*}
        |\cup_{i=1}^e A_i|=|X|+|\cup_{i=t}^e (A_i\setminus X)|\ge er-(e-1)k+1,
    \end{align*}
    which implies that
    \begin{align*}
        |\cup_{i=t}^e (A_i\setminus X)|\ge er-(e-1)k+1-|X|=(e-t+1)(r-k+1)-(e-t)+1.
    \end{align*}
    It follows that the $(r-k+1)$-graph
    $\ma{J}(X):=\{A\setminus X:A\in\ma{I}(X)\}$
    defined on the vertex set $V(\ma{F})\setminus X$ is $\mr{G}_{r-k+1}((e-t+1)(r-k+1)-(e-t),e-t+1)$-free.
    Moreover, it is clear that for all distinct $A,B\in\ma{I}(X)$, we have $A\setminus X\neq B\setminus X$, since otherwise
   $$|\cup_{i=1}^{t-1} A_i\cup A\cup B|\le x+r-k+1=tr-(t-1)k,$$ contradicting the fact that $\ma{F}$ is $\mr{G}_r((t+1)r-tk-1,t+1)$-free.
    We conclude that 
    \begin{align*}
        |\ma{I}(X)|=|\ma{J}(X)|\le f_{r-k+1}(n-|X|,(e-t+1)(r-k+1)-(e-t),e-t+1),
    \end{align*}
    as needed. Finally, the last sentence of (iii) follows by \cref{lem:upp} (i).
\end{proof}

Applying \cref{lem:structure} with $k=2$ yields the following result, which shows that any $r$-graph that satisfy the assumptions of \eqref{eq:property} but are not $\mr{G}_r((t-1)r-2(t-2)-1,t-1)$-free must contain a sufficiently large $\mr{G}_r((t-1)r-2(t-2)-1,t-1)$-free subhypergraph with non-decreasing edge density. Note that we did not try to optimize the constants written in \cref{lem:crucial}.

\begin{lemma}\label{lem:crucial}
    Let $t\ge 3$ and $r\ge 3$. Let $n$ be sufficiently large as a function of $e$ and $r$. Let $\ma{F}\subseteq\binom{[n]}{r}$ be an $r$-graph satisfying the assumptions of \eqref{eq:property}. Moreover, assume that $\frac{|\ma{F}|}{n^2}\ge\max\{\frac{1}{r(r-1)}-\frac{\delta}{r(r-2)},\frac{1}{2r^2-2r-5}\}$, where $\delta=\frac{1}{(3r-4)(r-1)+1}$. Then there exists a subhypergraph $\ma{F}'\subseteq\ma{F}$ with the following properties:
    \begin{itemize}
        \item [{\rm (i)}] $\ma{F}'$ is $\mr{G}_r((t-1)r-2(t-2)-1,t-1)$-free;
        \item [{\rm (ii)}] $|V(\ma{F}')|\ge \alpha n$, where $\alpha=\sqrt{\frac{3r^2-4r-14}{3r^2+2r-8}}$;
        \item [{\rm (iii)}] $\frac{|\ma{F}'|}{|V(\ma{F}')|^2}\ge \frac{|\ma{F}|}{n^2}$.
    \end{itemize}
\end{lemma}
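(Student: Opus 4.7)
The plan is an iterative vertex-deletion argument: repeatedly locate a bad configuration in $\ma{F}$ and delete its vertices, maintaining a density invariant, until the resulting subhypergraph becomes $\mr{G}_r((t-1)r-2(t-2)-1,t-1)$-free. Concretely, set $\ma{F}_0 := \ma{F}$ and $V_0 := [n]$; at step $i \ge 0$, if $\ma{F}_i$ is not yet $\mr{G}_r((t-1)r-2(t-2)-1,t-1)$-free, apply \cref{lem:structure} to $\ma{F}_i$ to obtain $A_1, \ldots, A_{t-1} \in \ma{F}_i$ whose vertex-union $X_i$ has size exactly $(t-1)(r-2)+1$, and set $V_{i+1} := V_i \setminus X_i$ and $\ma{F}_{i+1} := \ma{F}_i \cap \binom{V_{i+1}}{r}$. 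I would terminate at the first $s$ for which $\ma{F}_s$ is bad-configuration-free, and take $\ma{F}' := \ma{F}_s$ on the vertex set $V_s$; conclusion (i) is then immediate.

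By parts (ii) and (iii) of \cref{lem:structure}, the only edges of $\ma{F}_i$ meeting $X_i$ are the $t-1$ witnesses $A_1, \ldots, A_{t-1}$ and the at most $(n_i-|X|)/(r-2)$ members of $\ma{I}(X_i)$. Writing $d_i := |\ma{F}_i|/n_i^2$, the monotonicity $d_{i+1} \ge d_i$ rearranges after clearing denominators to
\[
d_i\,|X|\,(n_i + n_{i+1}) \ge (t-1) + \frac{n_i - |X|}{r-2},
\]
which for large $n_i$ reduces to the asymptotic threshold $d_i \ge 1/(2(r-2)|X|)$. The two-term maximum in the hypothesis on $d_0$ is tuned to beat this threshold in the extremal case $t=3$, $|X|=2r-3$: the bound $1/(2r^2-2r-5)$ covers small $r$ and $1/(r(r-1)) - \delta/(r(r-2))$ covers larger $r$. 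Since $d_i$ is then non-decreasing, the inequality persists at every step, yielding (iii).

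For (ii), I would accumulate the per-step edge loss to obtain $|\ma{F}_s| \ge |\ma{F}| - s(t-1) - \sum_{i=0}^{s-1}(n_i-|X|)/(r-2)$, and combine it with $|\ma{F}_s| \ge d_0(n-s|X|)^2$ (from (iii)) to extract a quadratic inequality in $s$. With $n_i = n - i|X|$ this sum telescopes, and solving for the largest compatible $s$ forces $s|X| \le (1-\alpha)n$ with the stated $\alpha = \sqrt{(3r^2-4r-14)/(3r^2+2r-8)}$; thus $|V_s| \ge \alpha n$, which gives (ii) after regarding $V_s$ as the nominal vertex set of $\ma{F}'$.

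The main obstacle I anticipate is maintaining the codegree-$\ge e$ hypothesis of \eqref{eq:property} across iterations: \cref{lem:structure}(i) uses it to pin $|X_i|$ to $(t-1)(r-2)+1$, yet vertex removal can drive some codegrees in $\ma{F}_i$ below $e$. The remedy will be to reapply \cref{lem:easy} to $\ma{F}_i$ between iterations, at the cost of an extra $O(n_i)$ edges removed per step; the correction $-\delta/(r(r-2))$ in the hypothesis, with $\delta = 1/((3r-4)(r-1)+1)$ matching the first-order rate in \eqref{eq:k=1}, is precisely what absorbs this loss. The arithmetic needed to balance all these contributions against the density budget is what fixes the rather delicate shapes of both the hypothesis on $|\ma{F}|/n^2$ and the constant $\alpha$.
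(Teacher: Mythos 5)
Your high-level plan — iterative removal of a bad configuration together with all its vertices, using \cref{lem:structure} to bound the per-step edge loss, and a density-increment calculation for (ii) and (iii) — matches the paper's proof. But the ``main obstacle'' you identify, and your proposed remedy, are both off. You worry that $\ma{F}_i$ may lose the codegree-$\ge e$ condition of \eqref{eq:property}, so \cref{lem:structure}(i) cannot pin $|X_i|$; your fix is to reapply \cref{lem:easy} each iteration, claiming $\delta$ absorbs the extra $O(n_i)$ edges lost per step. This does not work: the process runs for up to $\Theta(n)$ iterations, so an extra $(e-1)n_i$ edges lost per step costs $\Theta(n^2)$ edges in total, which is comparable to $|\ma{F}|$ itself, not a lower-order term. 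Concretely, the per-step density-increment condition $d_i\,|X|\,(n_i+n_{i+1})\ge \text{(edge loss)}$ would require $d_i\gtrsim(e-1)/(2|X|)$, which exceeds the ceiling $1/(r(r-2))$ from \cref{lem:upp}(ii) once $e\ge 3$. The constant $\delta$ in the paper is not there to absorb codegree repair; it is a slack term chosen so that $\frac{1}{r-1}+\delta<\frac{3}{3r-4}$, which makes the quadratic inequality in $y=jx$ close.

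The obstacle itself is spurious. A bad configuration $\{A_{1,j},\ldots,A_{t-1,j}\}$ found in $\ma{F}_j\subseteq\ma{F}$ is also a bad configuration in the original $\ma{F}$, and $\ma{F}$ satisfies \eqref{eq:property} by hypothesis; so \cref{lem:structure}(i) applied to $\ma{F}$ (not $\ma{F}_j$) already gives $|X_j|=x$. Parts (ii) and (iii) of \cref{lem:structure} are what you apply to $\ma{F}_j$ to get the tight bound $|\ma{I}(X_j)|<(v_j-x)/(r-2)$, and inspecting their proofs shows they use only the first line of \eqref{eq:property} (the forbidden-configuration conditions, which are hereditary under taking subhypergraphs) and never the codegree condition. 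So no re-cleaning step is needed, and introducing one would destroy the density bookkeeping that you otherwise set up correctly.
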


\begin{proof}
  Let $x=(t-1)r-2(t-2)-1$. Assume that $\ma{F}$ is not $\mr{G}_r(x,t-1)$-free, otherwise there is nothing to prove. Call a family of $t-1$ distinct edges $\{A_1,\ldots,A_{t-1}\}\subseteq\ma{F}$ a bad configuration if $|\cup_{i=1}^{t-1} A_i|\le x$. By \cref{lem:structure} (i), the union of every bad configuration has size exactly $x$.

  Let $\ma{F}_0=\ma{F}$. We will successfully construct hypergraphs $\ma{F}_1,\ma{F}_2,\ldots$ as follows. We aim to show that such a process will terminate and produce a hypergraph as needed. For $j\ge 0$, if $\ma{F}_j$ is not $\mr{G}_r(x,t-1)$-free, say, it contains a bad configuration $\{A_{1,j},\ldots,A_{t-1,j}\}$ with $X_j:=\cup_{i=1}^{t-1} A_{i,j}$. Then we set $\ma{F}_{j+1}$ to be the subhypergraph of $\ma{F}_j$ formed by
  \begin{align*}
      \ma{F}_{j+1}:=\ma{F}_j\setminus(\ma{I}(X_j)\cup\{A_{1,j},\ldots,A_{t-1,j}\}) \quad\text{and}\quad V(\ma{F}_{j+1}):=V(\ma{F}_j)\setminus X_j
  \end{align*}
  where $\ma{I}(X_j)$ is the $r$-graph defined in \cref{lem:structure} with $k=2$ and $A_{1,j},\ldots,A_{t-1,j},X_j$ playing the roles of $A_1,\ldots,A_{t-1},X$, respectively.

  Before going any further, we need to show that $V(\ma{F}_{j+1})$ is well-defined, namely, for every $A\in\ma{F}_{j+1}$, we have $A\cap X_j=\emptyset$. Indeed, by applying \cref{lem:structure} (ii) with $k=2$ and the definition of $\ma{I}(X_j)$, it is not hard to see that
\begin{align*}
    \{A\in\ma{F}_j:A\cap X_j\neq\emptyset\}=\{A\in\ma{F}_j:|A\cap X_j|\ge 1\}=\ma{I}(X_j)\cup\{A_{1,j},\ldots,A_{t-1,j}\},
\end{align*}
which proves that $V(\ma{F}_{j+1})$ is well-defined.

For each $j\ge 0$, denote $e_j=|\ma{F}_j|$ and $v_j=|V(\ma{F}_j)|$.
We claim that for $0\le j\le \frac{(1-\alpha) n}{x}$, the following hold:
  \begin{align}\label{eq:e_j}
      e_{j+1}> e_{j}-\frac{v_j}{r-2} \quad\text{and}\quad  v_{j+1}=v_j-x.
  \end{align}
Indeed, by definition we have $e_{j+1}=e_j-|\ma{I}(X_j)|-(t-1)$; moreover, by applying \cref{lem:structure} (iii) with $k=2$ we have for large enough $n$ (thus $v_j$ is also large enough for $0\le j\le \frac{(1-\alpha) n}{x}$),
\begin{align*}
    |\ma{I}(X_j)|+(t-1)<\frac{v_j-x}{r-2}+(t-1)<\frac{v_j}{r-2},
\end{align*}
which proves the inequality. The equality follows from the definition of $V(\ma{F}_{j+1})$ and the fact that $X_j\subseteq V(\ma{F}_{j})$.

Applying \eqref{eq:e_j} repeatedly yields that for every $0\le j\le \frac{(1-\alpha) n}{x}$,
\begin{align}\label{eq:j}
    e_j>e_0-\frac{1}{r-2}\sum_{i=0}^{j-1}v_i\quad\text{and}\quad v_j=v_0-xj.
\end{align}
Since $\ma{F}_j\subseteq\ma{F}$, it has to be $\mr{G}_r(er-2(e-1),e)$-free. Therefore, it follows by \cref{lem:upp} (ii) that the average degree of $V(\ma{F}_j)$ must be less than $\frac{v_j}{r-2}$, or equivalently,
\begin{align}\label{eq:degree}
    \frac{e_jr}{v_j}<\frac{v_j}{r-2}.
\end{align}
According to \eqref{eq:j} and \eqref{eq:degree}, we have
\begin{align}\label{eq:ratio}
    \frac{e_0-\frac{v_0j}{r-2}+\frac{j(j-1)x}{2(r-2)}}{(v_0-xj)^2}<\frac{e_j}{v_j^2}<\frac{1}{r(r-2)}.
\end{align}
Substituting $v_0=n,e_0\ge(\frac{1}{r(r-1)}-\frac{\delta}{r(r-2)})n^2$, setting $y=xj$, and rearranging yields that
\begin{align}\label{eq:y}
    y(2n-y)-\frac{ry}{2x}(2n-y)-\frac{ry}{2}<(\frac{1}{r-1}+\delta)n^2.
\end{align}
Note that $\frac{ry}{2}\le\frac{rn}{2}$. Observe that $x$ is at least the size of the union of two distinct $r$-subsets, so $x\ge r+1$. Substituting the above inequalities to \eqref{eq:y} gives that
\begin{align*}
    \frac{r+2}{2r+2}n^2-\frac{r+2}{2r+2}(n-y)^2<(\frac{1}{r-1}+\delta)n^2+\frac{rn}{2}<\frac{3n^2}{3r-4},
\end{align*}
where the last inequality follows from the choice of $\delta$ and the assumption that $n$ is sufficiently large as a function of $r$. Rearranging gives that
\begin{align*}
    (n-y)^2>\alpha^2 n^2.
\end{align*}
As $y\le n$, it follows that $y<(1-\alpha)n$ and hence $j<\frac{(1-\alpha)n}{x}$. We conclude that the above process must stop in at most $\frac{(1-\alpha)n}{x}$ steps, and therefore at most $(1-\alpha)n$ vertices of $V(\ma{F})$ could be removed. By our construction, it is clear that the resulting $r$-graph, denoted by $\ma{F}'$, satisfies conditions (i) and (ii) in the lemma. 

In order to prove that $\ma{F}'$ also satisfies (iii), it is enough to show that for every $j\ge 1$, $\frac{e_j}{v_j^2}>\frac{e_0}{v_0^2}$. Indeed, by \eqref{eq:ratio} we have
\begin{align*}
    \frac{e_j}{v_j^2}-\frac{e_0}{v_0^2}>\frac{e_0-\frac{v_0j}{r-2}+\frac{j(j-1)x}{2(r-2)}}{(v_0-xj)^2}-\frac{e_0}{v_0^2}.
\end{align*}
Plugging $y=jx$ and rearranging, we have
\begin{align*}
    \frac{e_j}{v_j^2}-\frac{e_0}{v_0^2}>\frac{1}{(n-y)^2}\Big(\frac{e_0}{v_0^2}(2ny-y^2)-\frac{ny}{(r-2)x}+\frac{y^2}{2(r-2)x}-\frac{y}{2(r-2)}\Big)
\end{align*}
Observe that $-\frac{ny}{(r-2)x}+\frac{y^2}{2(r-2)x}$ is negative. Then as $x\ge r+1$ and $\frac{e_0}{v_0^2}\ge\frac{1}{2r^2-2r-5}$, we have
\begin{align*}
    \frac{e_j}{v_j^2}-\frac{e_0}{v_0^2}>\frac{y}{(n-y)^2}\Big(\frac{n}{(2r^2-2r-5)(r^2-r-2)}-\frac{y/2}{(2r^2-2r-5)(r^2-r-2)}-\frac{1}{2(r-2)}\Big)>0,
\end{align*}
completing the proof of (iii), where the last inequality follows from the fact that $y/2<n/2$ and the assumption that $n$ is sufficiently large.
\end{proof}

\subsection{The remaining inequalities}\label{sub:remaining}
In this subsection we will prove the remaining inequalities of \cref{lem:recursive}. We will make use of the following result, proved independently in \cite{Delcourtconflict,Glockconflict}.

\begin{lemma}[see Theorem 1.3 in \cite{Delcourtconflict} and Theorem 1.1 in \cite{Glockconflict}]\label{lem:packing}
Let $r\ge 3$ and $e\ge 3$ be fixed integers. Then there exists $n_0$ and $\epsilon>0$ such that for all $n\ge n_0$ there exists an $n$-vertex $r$-graph which has at least $(1-n^{-\epsilon})\frac{n^2}{r^2-r}$ edges and is $\mr{G}_r(tr-2(t-1),t)$-free for every $2\le t\le e$.
\end{lemma}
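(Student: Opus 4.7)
The plan is to realize the desired $r$-graph as an almost-perfect conflict-free matching in an auxiliary ``design'' hypergraph and then invoke the conflict-free nibble machinery of \cite{Delcourtconflict,Glockconflict} as a black box. First I would set up the design hypergraph $H$: take $V(H)=\binom{[n]}{2}$, and to each $A\in\binom{[n]}{r}$ associate the hyperedge $\binom{A}{2}\subseteq V(H)$. Then $H$ is $\binom{r}{2}$-uniform and $\binom{n-2}{r-2}$-regular, with pair-codegree $O(n^{r-3})$. A matching $M$ in $H$ corresponds bijectively to a linear $r$-graph $\ma{F}_M$ on $[n]$, and a matching missing only $o(|V(H)|)$ vertices of $H$ yields $(1-o(1))\binom{n}{2}/\binom{r}{2}=(1-o(1))\frac{n^2}{r^2-r}$ edges. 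Linearity of $\ma{F}_M$ is exactly $\mr{G}_r(2r-2,2)$-freeness, so the case $t=2$ is automatic.

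Next I would encode the remaining BES-type forbidden configurations as a conflict system $\ma{C}$ on the edge set $E(H)$: for each $3\le t\le e$, declare every unordered $t$-tuple $\{A_1,\ldots,A_t\}$ of distinct $r$-subsets with $|\cup_i A_i|\le tr-2(t-1)$ to be a conflict. A matching $M$ that contains no conflict is precisely one for which $\ma{F}_M$ is $\mr{G}_r(tr-2(t-1),t)$-free for every $2\le t\le e$. The conflicts have bounded size (at most $e$), and the conflict degree at a fixed $r$-subset $A$ is easy to bound: the other $t-1$ edges of a conflict containing $A$ must lie in a vertex set of size at most $tr-2(t-1)$ containing $A$, so the extra $(t-1)(r-2)$ vertices can be chosen in $O(n^{(t-1)(r-2)})$ ways and the edges inside them in $O(1)$ ways. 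This is precisely the natural regime of the conflict-free matching theorems, and similar (more tedious) counts bound the conflict codegrees at pairs or longer tuples of edges of $H$.

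With these regularity and conflict bounds in hand, I would apply Theorem 1.3 of \cite{Delcourtconflict} or Theorem 1.1 of \cite{Glockconflict}. Either produces a conflict-free matching $M$ of $H$ with $|M|\ge (1-n^{-\epsilon})\binom{n}{2}/\binom{r}{2}$ for some $\epsilon=\epsilon(r,e)>0$ and all sufficiently large $n$, and the corresponding $r$-graph $\ma{F}_M$ is exactly the one required by the lemma.

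The main obstacle is not the combinatorial translation but the quantitative bookkeeping: one has to confirm that all the codegree and conflict-codegree hypotheses of the chosen black-box theorem hold with enough slack to guarantee a polynomial defect $n^{-\epsilon}$ rather than merely a qualitative $(1-o(1))$ savings. Once this bookkeeping is in place, the lemma follows immediately from the black-box application.
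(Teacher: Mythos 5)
The paper does not actually prove \cref{lem:packing}; it imports it wholesale, with the attribution ``see Theorem 1.3 in \cite{Delcourtconflict} and Theorem 1.1 in \cite{Glockconflict}'', so there is no internal proof to compare against. Your sketch is nonetheless a faithful reconstruction of how the cited results are obtained: set up the $\binom{r}{2}$-uniform design hypergraph $H$ on $V(H)=\binom{[n]}{2}$ whose hyperedges are the sets $\binom{A}{2}$ for $A\in\binom{[n]}{r}$, observe that matchings of $H$ correspond to linear $r$-graphs (handling $t=2$), encode each forbidden $t$-tuple of blocks spanning at most $t(r-2)+2$ points (for $3\le t\le e$) as a conflict, and invoke the conflict-free almost-perfect-matching theorem. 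The degree count $d=\binom{n-2}{r-2}=\Theta(n^{r-2})$, the codegree $\Delta_2(H)=O(n^{r-3})=o(d)$, the single-edge conflict degree $O(n^{(t-1)(r-2)})=O(d^{t-1})$, and the translation $(1-n^{-\epsilon})\binom{n}{2}/\binom{r}{2}=(1-n^{-\epsilon'})\frac{n^2}{r^2-r}$ are all as you say.

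Two points deserve sharpening, though neither undermines the strategy. First, the conflict system must be restricted to $t$-tuples $\{A_1,\ldots,A_t\}$ that are pairwise linear, i.e.\ whose images $\binom{A_i}{2}$ are pairwise disjoint; tuples violating this never occur inside a matching, and leaving them in would break the requirement that each conflict be a matching of $H$. Second, the conflict codegree bounds for $j'$-subsets with $2\le j' <t$ are genuinely more delicate than the single-edge count: given $j'$ pairwise-linear $r$-sets with union of size $u$, the number of size-$t$ conflicts extending them is $O(n^{t(r-2)+2-u})$, and one must check $u\ge j'r-2(j'-1)+\Omega(1)$ in the relevant regime (configurations that are themselves ``too tight'' are already conflicts of size $j'$, hence never present in the nibble process). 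This is precisely where the real bookkeeping in \cite{Delcourtconflict,Glockconflict} lives, as you anticipate in your closing paragraph, and it is not something a black-box invocation automatically dispenses with. With those caveats acknowledged, your outline matches the intended derivation.
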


We will use the following easy consequence of \cref{lem:packing}.

\begin{lemma}\label{lem:packing-modified}
Let $r\ge 3$ and $e\ge 3$ be fixed integers. Then there exists $n_0$ such that for all $n\ge n_0$ there exists an $n$-vertex $r$-graph $\ma{F}$ which satisfies $\frac{|\ma{F}|}{n^2}>\max\{\frac{1}{r(r-1)}-\frac{\delta}{r(r-2)},\frac{1}{2r^2-2r-5}\}$, where $\delta=\frac{1}{(3r-4)(r-1)+1}$, and the assumptions of \eqref{eq:property} for $k=2$.
\end{lemma}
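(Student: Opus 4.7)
The plan is to start from the near-optimal conflict-free hypergraph delivered by \cref{lem:packing} and clean it up via \cref{lem:easy} so that the codegree condition in \eqref{eq:property} is also satisfied, at the cost of deleting only $O(n)$ edges. The point is that \cref{lem:packing} already produces a hypergraph of edge density essentially $\frac{1}{r^2-r}$, while the required threshold is a constant strictly less than $\frac{1}{r^2-r}$ for every $r\ge 3$, so the gap of order $n^2$ easily absorbs the linear loss from cleaning.

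In detail, first I would invoke \cref{lem:packing} with the given $r$ and $e$ to obtain, for all $n\ge n_0$, an $n$-vertex $r$-graph $\ma{H}$ with $|\ma{H}|\ge (1-n^{-\epsilon})\frac{n^2}{r^2-r}$ that is $\mr{G}_r(tr-2(t-1),t)$-free for every $2\le t\le e$. Since being $\mr{G}_r(v,i)$-free becomes a stronger condition as $v$ grows, this $\ma{H}$ is simultaneously $\mr{G}_r(er-2(e-1),e)$-free and $\mr{G}_r(ir-2(i-1)-1,i)$-free for every $2\le i\le e-1$, i.e., it fulfils the first line of \eqref{eq:property} with $k=2$ for the strongest choice $t=2$ (and hence for every $2\le t\le e-1$). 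Next I would apply \cref{lem:easy} with $k=2$ to $\ma{H}$, obtaining a subhypergraph $\ma{F}\s\ma{H}$ of size at least $|\ma{H}|-(e-1)n$ in which every single vertex has codegree either $0$ or at least $e$; this supplies the second line of \eqref{eq:property}. Taking subhypergraphs preserves all the freeness properties, so $\ma{F}$ still satisfies the first line.

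For the density bound, a short arithmetic check shows that for $r\ge 3$ both of the quantities $\frac{1}{r(r-1)}-\frac{\delta}{r(r-2)}$ and $\frac{1}{2r^2-2r-5}$ are strictly smaller than $\frac{1}{r^2-r}$: the first trivially because $\delta>0$, the second because $r^2-r\ge 6>5$ gives $r^2-r<2r^2-2r-5$. Hence the right-hand side of the target inequality is below $\frac{1}{r^2-r}$ by a positive constant depending only on $r$, while
\[
\frac{|\ma{F}|}{n^2}\ge (1-n^{-\epsilon})\frac{1}{r^2-r}-\frac{e-1}{n}
\]
differs from $\frac{1}{r^2-r}$ by an error tending to $0$ as $n\to\infty$. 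Choosing $n$ large enough in terms of $r$ and $e$ therefore completes the argument. There is no substantive obstacle: the lemma is essentially a bookkeeping combination of \cref{lem:packing} and \cref{lem:easy}, and the only thing one has to be a bit careful about is recording that the strict gap between $\frac{1}{r^2-r}$ and the given maximum is a positive constant independent of $n$.
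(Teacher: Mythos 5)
Your proposal is correct and mirrors the paper's own proof: both invoke \cref{lem:packing} for the dense conflict-free hypergraph, apply \cref{lem:easy} with $k=2$ to repair the codegree condition at a cost of at most $(e-1)n$ edges, and observe that $\frac{1}{r^2-r}$ strictly exceeds both terms in the required maximum for $r\ge 3$, so the linear loss is absorbed for large $n$. The extra detail you give (spelling out why $\mr{G}_r(tr-2(t-1),t)$-freeness implies the first line of \eqref{eq:property}, and the arithmetic check $r^2-r<2r^2-2r-5$) is accurate and just makes explicit what the paper treats as routine.
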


\begin{proof}
    It is clear that the $r$-graph given by \cref{lem:packing} satisfies the first line of \eqref{eq:property}. It is also clear by \cref{lem:easy} that the $r$-graph given by \cref{lem:packing} can be made to satisfy the second line of \eqref{eq:property} by removing at most $(e-1)n$ of its edges. Lastly, the required lower bound on $\frac{|\ma{F}|}{n^2}$ holds for sufficiently large $n$ since for every $r\ge 3$, $\frac{1}{r^2-r}$ is strictly larger than both $\frac{1}{r(r-1)}-\frac{\delta}{r(r-2)}$ and $\frac{1}{2r^2-2r-5}$.
\end{proof}

Given Lemmas \ref{lem:crucial} and \ref{lem:packing-modified}, the remaining inequalities of \cref{lem:recursive} can be proved by standard analysis. We will follow the proof of Theorem 1.7 in \cite{bes}.

\begin{proof}[Proof of the remaining inequalities of \cref{lem:recursive}] For every $3\le t\le e-1$, let $$a_t=\limsup_{n\rightarrow\infty}\frac{f_r^{(t)}(n,er-2(e-1),e)}{n^2}.$$ It suffices to show that for every $t$, $a_t\le a_{t-1}$.

Assume for the sake of contradiction that there exists some $3\le t\le e-1$ such that $a_t>a_{t-1}$. Let $\epsilon=a_t-a_{t-1}$, $b=\max\{\frac{1}{r(r-1)}-\frac{\delta}{r(r-2)},\frac{1}{2r^2-2r-5}\}$ with $\delta=\frac{1}{(3r-4)(r-1)+1}$, and $c=\max\{a_t-\epsilon/2,b\}$. Since $a_t>b$ by \cref{lem:packing-modified}, it follows that
\begin{align*}
 a_t>c\ge a_t-\epsilon/2>a_{t-1}.
\end{align*}
Let
\begin{align*}
    \ma{I}_t=\{n\in\mathbb{N}:\frac{f_r^{(t)}(n,er-2(e-1),e)}{n^2}\ge c\}
\end{align*}
and
\begin{align*}
    \ma{I}_{t-1}=\{n\in\mathbb{N}:\frac{f_r^{(t-1)}(n,er-2(e-1),e)}{n^2}\ge c\}.
\end{align*}

Then, as $a_t>c$, $\ma{I}_t$ is infinite. As $a_{t-1}<c$, $\ma{I}_{t-1}$ is finite. Let $n_{t-1}=\max\{n:n\in\ma{I}_{t-1}\}$.
Since $\ma{I}_t$ is infinite, there exists $n_t\in\ma{I}_t$ such that $n_t>n_{t-1}/\alpha$, where $\alpha$ is defined in \cref{lem:crucial} (ii). According to the definition of $\ma{I}_t$, there exists an $n_t$-vertex $r$-graph $\ma{F}$ which satisfies the conditions of \eqref{eq:property} with $k=2$, and $\frac{|\ma{F}|}{|V(\ma{F})|^2}\ge c$. As $\ma{F}$ also satisfies the assumptions of \cref{lem:crucial}, it follows that there exists a subhypergraph $\ma{F}'\subseteq\ma{F}$ that satisfies all of the three properties listed in \cref{lem:crucial}. In particular, we have $\frac{|\ma{F}'|}{|V(\ma{F}')|^2}\ge \frac{|\ma{F}|}{n^2}\ge c$, which implies that $|V(\ma{F}')|\in\ma{I}_{t-1}$. However, we also have $|V(\ma{F}')|\ge\alpha |V(\ma{F})|>n_{t-1}$, which contradicts the maximality of $n_{t-1}$.
\end{proof}



\section*{Acknowledgements}
The research  of Chong Shangguan is supported by the National Key Research and Development Program of China under Grant No. 2021YFA1001000, the National Natural Science Foundation of China under Grant No. 12101364, and the Natural Science Foundation of Shandong Province under Grant No. ZR2021QA005.

\bibliographystyle{plain}
\bibliography{degenerate_turan_2}
 \end{document}